\newtheorem{thm}{Theorem}[section]
\newtheorem{cor}[thm]{Corollary}
\newtheorem{claim}[thm]{Claim}
\newtheorem{que}{Question}[section]
\theoremstyle{definition}
\newtheorem{rem}[thm]{Remark}
\newtheorem{exmp}[thm]{Example}
\numberwithin{equation}{section}
\DeclareMathOperator{\diam}{diam}
\newcommand{\Z}{\mathbb{Z}}
\newcommand{\R}{\mathbb{R}}
\newcommand{\eps}{\varepsilon}
\begin{document}

\vspace{0.5in}

\title[A compact minimal space $Y$ such that its square $Y\times Y$ is not minimal]{A compact minimal space $Y$ such that its square $Y\times Y$ is not minimal}
%[On entropy of pseudocircle self-maps with rotational chaos]%
%{ \\Example For The Authors}

\author{J. P. Boro\'nski}
\address[J. P. Boro\'nski]{National Supercomputing Centre IT4Innovations, Division of the University of Ostrava,
Institute for Research and Applications of Fuzzy Modeling,
30. dubna 22, 70103 Ostrava,
Czech Republic -- and --
AGH University of Science and Technology, Faculty of Applied Mathematics,
al. Mickiewicza 30,
30-059 Krak\'ow,
Poland}
\email{jan.boronski@osu.cz}

\author{Alex Clark}
\address[A. Clark]{Department of Mathematics, University of Leicester, University Road, Leicester LE1 7RH, United Kingdom}
\email{Alex.Clark@le.ac.uk}

\author{P. Oprocha}
\address[P. Oprocha]{AGH University of Science and Technology, Faculty of Applied
Mathematics, al.
Mickiewicza 30, 30-059 Krak\'ow, Poland
-- and --
National Supercomputing Centre IT4Innovations, Division of the University of Ostrava,
Institute for Research and Applications of Fuzzy Modeling,
30. dubna 22, 70103 Ostrava,
Czech Republic}
\email{oprocha@agh.edu.pl}
\subjclass[2010]{37B05, 37B45}

\begin{abstract}
The following well known open problem is answered in the negative: Given two compact spaces $X$ and $Y$ that admit minimal homeomorphisms, must the Cartesian product $X\times Y$ admit a minimal homeomorphism as well? Moreover, it is shown that such spaces can be realized as minimal sets of torus homeomorphisms homotopic to the identity. A key element of our construction is an inverse limit approach inspired by combination of a technique of Aarts \& Oversteegen and the construction of Slovak spaces by Downarowicz \& Snoha \& Tywoniuk. This approach allows us also to prove the following result. Let $\phi\colon M\times\mathbb{R}\to M$ be a continuous, aperiodic minimal flow on the compact, finite--dimensional metric space $M$. Then there is a generic choice of parameters $c\in\mathbb{R}$, such that the homeomorphism $h(x)=\phi(x,c)$ admits a noninvertible minimal map $f\colon M\to M$ as an almost 1-1 extension. %We also show a family of spaces, with the property announced in the tittle, each of which is a minimal set of a torus homeomorphism homotopic to the identity. 
\end{abstract}

\maketitle
\section{Introduction}
\subsection{Motivation}
Given a compact metric space $X$, a map $f:X\to X$ is said to be \emph{minimal} if the forward orbit $\{f^n(x):n=1,2,\ldots\}$ is dense in $X$, for every $x\in X$. In such a case we call $X$ minimal with respect to $f$, or simply minimal if there is no confusion as to the map. In the present paper we contribute to the following two well known problems. %We shall refer to a space $X$ as minimal if it admits a minimal map.
\begin{itemize}
	\item[(Q1)] Is minimality with respect to homeomorphisms preserved under Cartesian product in the class of compact spaces?
	\item[(Q2)] What spaces admit minimal noninvertible maps?
\end{itemize}
\subsection{Minimality of products of minimal spaces}
As with many other topological and dynamical properties, a fundamental question is to determine whether the property of being a minimal space is preserved under the Cartesian product, and it seems quite surprising that it has not been settled until now. Note that, for example for the fixed point property this question had been resolved in the negative already 50 years ago by Lopez \cite{Lopez}. In addition, basic examples of minimal spaces such as the circle, or the Cantor set, are known to preserve minimality under the Cartesian product, and there have not been even prospective candidates to provide a counterexample. Although no product of a homeomorphism with itself $(h,h):X\times X\to X\times X$ is minimal, by the fact that it keeps the diagonal $\Delta=\{(x,x)\,|\,x\in X\}$ invariant, the Cartesian product typically gives rise to new homeomorphisms that do not factor as conjugate homeomorphisms onto both coordinate spaces, and in this way minimal homeomorphisms of the product can often be obtained. In the present paper, however, we finally settle this problem in the negative, by providing a class of counterexamples. Each space $Y$ in the class is such that it admits a minimal homeomorphism but no Cartesian power of it $Y^n$ does. Each such space also admits a monotone map onto a suspension of a minimal Cantor system, and the minimal homeomorphisms it admits are extensions of minimal homeomorphisms of the suspension. Note that a similar counterexample does not exist for flows. In that case minimality is preserved by countable Cartesian products, as shown by Dirb\'ak \cite{Dirbak}.
\subsection{Minimal noninvertible maps}
In the late 1960s Auslander raised the question concerning the existence of minimal noninvertible maps. In a joint work with Yorke he showed that the Cantor set admits such maps \cite{AY}. Since then more examples have been given, but as in the case of minimal homeomorphisms and flows, the classification of spaces admitting such maps is a difficult well known open question. In 1979 Auslander and Katznelson showed that the circle admits no such maps \cite{AK}, despite supporting minimal homeomorphisms. In 2001 Kolyada, Snoha and Trofimchuk constructed such maps for the 2-torus, proving that any minimal skew product homeomorphism of the 2-torus $\mathbb{T}^2$ having an asymptotic pair of points has an almost one-to-one factor
which is a noninvertible minimal map of $\mathbb{T}^2$ \cite{Kolyada}. Modifying this approach, in 2003 Bruin, Kolyada and Snoha proved in \cite{Bruin} that any minimal skew product homeomorphism of $\mathbb{T}^2$ having an asymptotic pair of points has a factor which is a noninvertible minimal map of a 2-dimensional nonhomogeneous metric continuum $X$, such that any homeomorphism of $X$ has a fixed point. Tywoniuk showed in \cite{Tywoniuk} that solenoids, unlike the circle, admit noninvertible minimal maps. It had been a long-standing open question if the circle is the only nondegenerate continuum that admits a minimal homeomorphism but no minimal noninvertible map. It is a well known unresolved conjecture that the pseudo-circle is another such continuum, motivated by the fact that it admits minimal homeomorphisms \cite{Handel}. Recently Downarowicz, Snoha and Tywoniuk \cite{Downarowicz} have answered the general question in the negative by providing a family of counterexamples that belong to the class of so-called Slovak spaces. These spaces have their homeomorphism groups generated by a minimal homeomorphism, which in some cases has positive entropy, but they all admit no minimal noninvertible maps. In the present paper we show a new, very general class of compact spaces that admit minimal noninvertible maps. Namely, any compact, finite--dimensional metric space that admits a continuous, aperiodic minimal flow belongs to that class. In addition to some manifolds with zero Euler characteristic, examples include peculiar minimal sets of flows on compact manifolds, such as the Denjoy and Kuperberg minimal sets.
\subsection{Structure of the paper}
We shall start in Section 2 with the construction of a new class of spaces admitting minimal noninvertible maps. The maps are obtained as perturbations of time $t$ homeomorphisms of aperiodic minimal continuous flows on metric spaces. Our inverse limit approach is motivated by the method introduced by Aarts \& Oversteegen, who in \cite{Aarts} used it to construct a transitive homeomorphism of the Sierpinski Carpet.
A novel variant of this method is then used in Section 3 to modify minimal Cantor systems suspensions to obtain minimal continua, none of which admits minimality in the Cartesian product of finitely many copies of itself. These continua resemble some of the Slovak spaces constructed by different methods by Downarowicz, Snoha and Tywoniuk, where a dense orbit of a minimal homeomorphism was blown up to a null seqeunce of $\sin(1/x)$-curves. Our inverse limit approach allows us to introduce a null sequence of pseudo-arcs instead, which in the resulting spaces forces factorwise rigidity, as well as almost cyclicity of their homeomorphism groups, in the sense that they are isomorphic to either $\mathbb{Z}$ or $\mathbb{Z}\otimes\mathbb{Z}_2$. In the final section of the paper we illustrate how these techniques can be applied in a wider context to spaces that do not admit minimal flows but instead minimal maps that preserve a local product structure in a sufficiently smooth manner.
\section{Minimal noninvertible maps}
In what follows, we identify the unit circle with $S^1=\R/\Z$ and the 2-dimensional torus with $\mathbb{T}^2=S^1\times S^1$.

A continuous surjection $\pi \colon X\to Y$ is \textit{almost 1-1} if the set
$$
Y_1=\bigcap_{n=1}^\infty \left\{y : \diam (\pi^{-1}(y))<1/n\right\}
$$
is dense in $Y$. A dynamical system $(X,f)$ is an \textit{almost 1-1 extension} of $(Y,g)$
if there exists an almost 1-1 factor $\pi \colon (X,f)\to (Y,g)$. Note that if $(X,f)$ is a minimal almost 1-1
extension of $(Y,g)$ then $\pi^{-1}(Y_1)$ is dense in $X$. In fact both sets $Y_1$ and $\pi^{-1}(Y_1)$ are residual in that case.

In \cite{Kolyada} Kolyada, Snoha, and Trofimchuk proved that any minimal skew product homeomorphism of the 2-torus having an asymptotic pair of points has an almost 1-1 factor which is a noninvertible minimal map. By an inverse limit technique of Aarts and Oversteegen \cite{Aarts}, as well as the application of Lemma 3.1 in \cite{Kolyada}, we prove the following related result.

\begin{thm}\label{generalnonivert}
Let $\phi\colon M\times\mathbb{R}\to M$ be a continuous, aperiodic minimal flow on the compact, finite--dimensional metric space $M$. Then there is a generic choice of parameters $c\in\mathbb{R}$, such that the homeomorphism $h(x)=\phi(x,c)$ admits a noninvertible minimal map $f\colon M\to M$ as an almost 1-1 extension.
\end{thm}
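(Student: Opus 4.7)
The plan is to build $f$ as the inverse limit of a sequence of controlled modifications of $h$, in the Aarts-Oversteegen style. I would first fix $c$ in the residual subset of $\R$ on which $h=\phi_c$ is minimal; for an aperiodic minimal flow this set is indeed residual, since the bad $c$'s are confined to the meager subset governed by rational relations among the rotation numbers in the Kronecker factor of $\phi$.

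Second, I would construct an inverse system $(M_n,h_n,p_n)_{n\ge 0}$ with $(M_0,h_0)=(M,h)$ and monotone factor maps $p_n\colon(M_n,h_n)\to(M_{n-1},h_{n-1})$. At stage $n$, pick a point $y_n$ from a prescribed dense $h$-orbit together with a flow box of transversal size $\eps_n$ and vertical length $\eta_n$ about $y_n$, with $\eps_n,\eta_n\to 0$; inside this box replace the central flow arc by a folded copy, introducing a single 2-to-1 identification. The 1-dimensional flow foliation supplies a canonical $h$-equivariant direction along which to perform the fold, playing here the role that the asymptotic pair plays in \cite{Kolyada}, and Lemma~3.1 of that paper should give the precise recipe guaranteeing that $h_n$ is again minimal, $p_n$ is monotone and almost 1-1, and $p_n\circ h_n=h_{n-1}\circ p_n$. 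The inverse limit $(M_\infty,h_\infty)$ is then noninvertible (the folds accumulate to a nontrivial 2-to-1 collapse), minimal (inherited from the tower of minimal factors), and an almost 1-1 extension of $(M,h)$ via the projection to the zeroth coordinate, because the fold set is a countable union of $h$-orbits whose complement is a dense $G_\delta$.

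The main obstacle is to identify $M_\infty$ with $M$ itself; without this one only obtains a noninvertible minimal map on a space merely resembling $M$. This is precisely where finite-dimensionality of $M$ enters: arranging the sequence $\eps_n$ summable makes each $p_n$ an $\eps_n$-near homeomorphism, the composition $p_1\circ\cdots\circ p_n$ converges uniformly to a near-homeomorphism $M_\infty\to M$, and a finite-dimensional near-homeomorphism criterion promotes this to a genuine homeomorphism. The book-keeping required to simultaneously ensure summability of $\eps_n$, pairwise disjointness of the finite window of $h$-iterates of each flow box at stage $n$ (so that a fold does not interact with its own iterates), and preservation of minimality throughout the construction, is delicate but should be tractable given the flow structure and Lemma~3.1 of \cite{Kolyada}.
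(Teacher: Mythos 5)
Your overall architecture --- a residual set of minimal time-$t$ maps, an inverse system of local modifications performed in flow boxes, Brown's theorem to identify the inverse limit with $M$, and almost 1-1-ness from the modified set being contained in a single countable orbit --- matches the paper's. But the core mechanism you propose for creating noninvertibility is both internally inconsistent and unworkable. A ``single 2-to-1 identification'' cannot be realized by a \emph{monotone} factor map: monotone means point-preimages are connected, whereas a fold has two-point fibres. More importantly, a fold localized in one flow box around $y_n$ cannot satisfy $p_n\circ h_n=h_{n-1}\circ p_n$, because $h$ moves that flow box off itself; any equivariant surgery must be spread along an entire orbit. Lemma~3.1 of \cite{Kolyada} does not supply a ``folding recipe'' --- it concerns collapsing a continuum $D$ with $\diam T^n(D)\to 0$ together with all of its forward images, which is the opposite kind of surgery. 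As a sanity check that folding cannot be the right mechanism: Auslander and Katznelson proved that the circle admits no noninvertible minimal map, so no local fold of a flow arc can preserve minimality there.

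What the paper actually does is blow up each point $x_{-n}=F^{-n}(x_0)$ of the \emph{backward} orbit of a fixed $x_0$ into an arc $I_n$, by completing a flow-box chart in $\R^d$ with respect to a metric that records the angular function $c(y)=y_1/\|y\|$; the remainder is an interval because the level cones of $c$ meet each flow line once. This is where finite-dimensionality enters --- to embed the charts in $\R^d$ so the angular compactification makes sense --- not, as you suggest, in a near-homeomorphism criterion for the inverse limit (Brown's theorem needs no dimension hypothesis, and here each bonding map just collapses one arc to a point, hence is monotone). The map $H$ acts as $F$ off the arcs, shifts the arc over $x_{-n-1}$ onto the arc over $x_{-n}$, and sends the arc over $x_{-1}$ onto the single point $x_0$, which was never blown up. This collapse of a nondegenerate arc to a point --- not a fold --- is the source of noninvertibility, and it coexists with minimality precisely because the arcs form a null sequence with empty interior. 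Your proposal is missing this shift-and-collapse structure along a single orbit, and without it neither the equivariance nor the noninvertibility can be obtained.
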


\begin{proof}
By standard techniques, see e.g. \cite{Fayad}, there exists a residual set $A$ of parameters $t$ such that the time--$t$ map $F_t:=\phi(t,\cdot)$ is minimal, and hence we may fix such a $t_0>0$ and denote $F=F_{t_0}.$  By the theorem of Bebutov and its consequences (see, e.g., \cite[pp. 333--338]{NS}) there is for each $x \in M$ and $T\in \R,$ a local section $ S(x,T)$ with corresponding flow box neighbourhood $U(x,T)$ and a homeomorphism onto its image $h\colon U(x,T) \to \R^{d}$ that conjugates the time $t$-tmap $\phi(\cdot,t)$    of the flow restricted to $U(x,T)$ with addition by $t$ in the first coordinate on the image. Specifically,  for some $\delta >0$ the $\delta$ ball centered at $x$, $B(x,\delta)$  contains a set $ S(x,T)$ so that for each $y\in \bigcup_{t \in [-T,T]} F_t\left(B(x,\delta)\right)=U(x,T)$ there is a unique $t_y$ such that $\phi(-t_y,y)\in S(x,T),$ and the homeomorphism $h$ maps $S(x,T)$ into the set $ \left\{(y_i)\in \R^d \, | \, y_1=0\right\}$ and the point $y$ as above to $(t_y,y_2,\dots,y_d),$ where $(0,y_2,\dots,y_n)=h\left(\phi(-t_y,y)\right).$  The fact that we can choose a finite--dimensional Euclidean space $\R^{d}$ follows from the standard fact that finite--dimensional separable  metric spaces can be embedded in a finite--dimensional Euclidean space. We now fix a particular $x_0\in M$ and then construct for $n\in \Z^+$ a nested sequence of local sections $ S(x_0,nt_0)$  (i.e. $S(x_0,(n+1)t_0)\subset S(x_0,nt_0)$) and corresponding flow box neighbourhoods $U(x_0,nt_0)$, with corresponding homeomorphisms $h_n\colon U(x_0,nt_0) \to \R^d$ onto their images, constructed in such a way that $h_{n}=h_{n+1}$ when restricted to $U(x_0,nt_0)\cap U(x_0,(n+1)t_0)$ and $h_n(x_0)=0$ for all $n.$

%Now choose a Lebesgue number $\eps>0$ of the cover $\left\{ U_i \right\} $ and choose $T>0$ so that for each  every $x$ we have $\phi(x,[0,T])\subset %B(x,\eps/2)$. In particular, each arc $\phi(x,[0,T])$ (together with a small flow box neighborhood) is always contained in some $U_i$. By \cite{Fayad} there %exists a residual set $A$ of parameters $t$ such that the time-$t$ map $F_t=\phi(t,\cdot)$ is minimal, and so we fix such a $0<t<T$ and denote $F=F_t$.

For each $n\in \Z$ denote $x_{n}=F ^n(x_0)$. We put $X_0=M$ and will define spaces $X_n$ for $n>0$ inductively.
Using $\| x- y \|$ to denote the Euclidean metric in $\R^d$, we define a new metric $D$ on $\R^d\setminus\{0\}$ by
  \[
  D(x,y)=\|x-y\|+|c(x)-c(y)|, \text{ where } c(y)=\frac{y_1}{\sqrt{\sum_{i=1}^{d} y_i^2}}\;.
  \]
The completion of $\R^d\setminus\{0\}$ with respect to $D$ has remainder given by an interval that can naturally be identified with $[-1,1]$, corresponding to the limiting value of $c$ for the points in the remainder. By the compactness of $[-1,1],$ any restriction of this completion to a compact subset of $\R^d$ intersected with $\R^d\setminus\{0\}$ is compact. The level sets of $c$, $c^{-1}(x),\,\,\,x\in (-1,1)$ form cones that separate $\R^d\setminus\{0\}$ and intersect \emph{each} line that is an image of a flow line in exactly one point. Since the level sets of $c$ are closed under non--zero scalar multiplication, any neighbourhood of $0$ in $\R^d$ will intersect each level set.  Thus, under the hypotheses of aperiodicity and minimality, the completion of the images of $h_n$ in $\R^d$ will also have intervals as remainders.

Denote by $X_1$ the compactification of $X_0\setminus \{x_{-1}\}$ by $I_1=[-1,1]$ in the following way. We take $h_1 \left( U(x_0,t_0)\right)+(t_0,0,\dots,0)$ so that the image of $x_{-1}$ is at $0$ and we complete using the metric $D$ as described above. We then identify $I_1$ with the interval obtained as remainder in the completion of the metric $D.$ In the same way, $X_2$ is obtained from $X_1$ by blowing up the point $x_{-2},$ and recursively the space $X_{n+1}$ is obtained from $X_n$ by removing the point $x_{-n}$ and compactifying the resulting hole by a closed interval $I_n$ identified with $[-1,1]$ as with $I_1.$
	It is not hard to see that $X_1$ is homeomorphic to $X_0$ because $\overline{U(x_0,t_0)}$ and the compactification of $\overline{U(x_0,t_0)}\setminus \{x_{-1}\}$ in $X_1$ are homeomorphic by a homeomorphism which is the identity on the boundary of $\overline{U(x_0,t_0)}$. By the same argument all $X_n$ are homeomorphic to $X_0=M$.
Observe that compactification of $X_0\setminus\{x_{-1},\ldots, x_{-n}\}$ by intervals using the above method gives raise to a space homeomorphic to $X_n$.
This is due to the fact that $h_{n+1}|_{U(x_0,nt_0)\cap U(x_0,(n+1)t_0)}=h_{n}|_{U(x_0,nt_0)\cap U(x_0,(n+1)t_0)}$.
	
Denote by $X_\infty$ the inverse limit of the spaces $X_n$ with bonding maps $\pi_n \colon X_{n+1}\to X_n$ defined by $\pi_n(x)=x$ for $x\not \in I_{n+1}$ and $\pi_n(x)=x_{-n-1}$ for $x\in I_{n+1}$. Then by a result of Brown \cite[Theorem 4]{Brown} we obtain that $X_\infty$ is homeomorphic to $M$ because all the maps $\pi_n$ are monotone.
	
	Now we shall define a minimal but noninvertible map $H$ on $X_\infty$. Fix any $\underline{x}\in X_\infty$. If for every $n\geq 0$ the coordinate $\underline{x}_{\;n}\not\in I_n$ then we put $H(\underline{x})_i=F(\underline{x}_i)=F(\underline{x}_0)$ for every $i\geq 0$. Now suppose that for some $n$ we have $\underline{x}_{\;n}\in I_n$. Then $\underline{x}_i\in I_n$ for all $i>n$
and $\underline{x}_i=x_{-n}$ for $i<n$. If $n=1$ then we put $H(\underline{x})_i=x_0$ for every $i\geq 0$. For the case $n>1$, observe that since both $I_n$ and $I_{n+1}$
are identified with $[-1,1]$, we have a homeomorphism $\psi \colon I_{n+1}\to I_n$ which is the identity after this identification. We put $H(\underline{x})_i=\psi(\underline{x}_i)$
for $i\geq n$ and for $i<n$ we put inductively $H(\underline{x})_i=\pi_{i}\left(\underline{x}_{i+1}\right)$. This way $H$ is defined on $X_\infty$ and it is clear that it is  surjective.
We must show that it is continuous. If $\underline{x}\in X_\infty$ is such that  $\underline{x}_{\;n}\not\in I_n$ for any $n>0$ then it is clear that $H$ is continuous at
$\underline{x}$ because in each $X_i$ there is a neighborhood of $\underline{x}_{i}$ disjoint from $I_1\cup \cdots \cup I_i$. On the other hand if for some $n$ $\underline{x}_{\;n} \in I_n$,  $x_{-n}$ and $x_{-n-1}$
are contained in a flow box $U(x_0,(n+1)t_0)$ that was used in the construction of the compactification, which means that $F$ can be locally identified with a translation in $\R^d$ that preserves the level sets of $c$ used in the definition of the compactifications. It easily follows that $H$ is continuous also at such $\underline{x}$.
	
Finally, we must show that $H$ is minimal. Every minimal homeomorphism is forward minimal, hence for every $z\in M$ its forward orbit $\{F^n(z): n\geq 0\}$ is dense in $M$. In particular, the forward orbit of every $z\in M\setminus\{x_{-n}: n\geq 1\}$ is dense in $M\setminus\{x_{-n}: n\geq 1\}$. Furthermore, for every $y\in M$ and every $a<b$, the segment $\phi(y,[a,b])$ has empty interior in $M$, because otherwise $M$ would be a single closed orbit.
	Then it is clear that for every $\underline{x}\in X_\infty$ and every $n\geq 0$ the set $\{H^i(\underline{x})_n : i\geq 0\}$ is dense in $X_n$. But then the forward orbit of $\underline{x}$ under $H$ is dense in $X_\infty$. This shows that $H$ is minimal. It is also clear that $H$ is not invertible.
	
By the same argument, the natural projection $\pi\colon X_\infty \to M$ is one-to-one on a dense set, hence is almost 1-1 extension (see also \cite[Theorem 2.7]{Kolyada}).
	The proof is completed.
\end{proof}
\begin{rem}\label{rem:r1}
Since our goal was to construct a noninvertible map, we have only compactified with intervals for points in the negative orbit of $x_0.$ However, it is clear that we could have compactified with intervals for points in the complete orbit of $x_0$ to form an invertible map instead.
\end{rem}

A large but not exhaustive class of spaces admitting a minimal flow can be obtained by suspension as follows. Let $h\colon C\to C$ be a minimal homeomorphism of a compact metric space $C.$ Put $X=C\times \R /_\sim$,  where $\sim$ is the equivalence relation given by:
$(x,y)\sim (p,q)$ provided that $y-q\in \Z$ and $p=h^{-y+q}(x)$.  Then the \textit{suspension flow} defined by $h$
is the continuous flow induced on $X$ given by $\phi_t(x,s)=(x,s+t)/_\sim$. Since the orbits of $h$ are dense, the flow orbits are dense in $X$, and so $X$ is a continuum.
As before, there exists a residual set $A$ of parameters $t$ such that $\phi_t$ is minimal. By a \textit{minimal suspension} of $h$ we mean any homeomorphism $h=\phi_t$ for $t\in A$. And as long as $C$ is infinite,  $\phi$ is aperiodic. Then a direct application of Theorem~\ref{generalnonivert} gives the following.

\begin{cor}\label{r2:t2}
Suppose that $\phi$ is suspension flow defined by a minimal homeomorphism $h\colon C\to C$ on an infinite, finite--dimensional compact metric space $C$ and let $X$ be the phase space of $\phi$. Then $X$ is a continuum that admits a noninvertible minimal map.
\end{cor}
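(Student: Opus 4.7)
The plan is straightforward: verify the hypotheses of Theorem \ref{generalnonivert} for the pair $(M,\phi)=(X,\phi)$ and quote the conclusion. The hypotheses to check are that $X$ is a compact, finite--dimensional metric space and that $\phi$ is a continuous, aperiodic, minimal flow.

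For the topological side, $X$ is the quotient of the compact metric space $C \times [0,1]$ by the closed equivalence relation identifying $(x,1)$ with $(h(x),0)$, so $X$ is compact and metrizable. Finite--dimensionality follows from the local product structure of the suspension: every point of $X$ has a neighbourhood homeomorphic to an open subset of $C \times \mathbb{R}$, and the product theorem of classical dimension theory then gives $\dim X \leq \dim C + 1 < \infty$. For the dynamical side, continuity of $\phi$ is built into the construction, and minimality is already recorded in the paragraph preceding the corollary (density of $h$-orbits in $C$ implies density of $\phi$-orbits in $X$). Aperiodicity is the one place the hypothesis that $C$ is infinite is used: a periodic point $\phi_T(x,s)=(x,s)$ with $T>0$ forces $T\in\mathbb{Z}$ and $h^T(x)=x$ from the defining relation, yet a minimal homeomorphism of an infinite compact space admits no periodic points.

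With all hypotheses verified, Theorem \ref{generalnonivert} applies and yields, for generic $c\in\mathbb{R}$, a noninvertible minimal map $f\colon X\to X$ that is an almost 1-1 extension of the time-$c$ homeomorphism $\phi_c$. That $X$ is a continuum has already been observed in the paragraph preceding the corollary. I do not expect any real obstacle in this argument; it is essentially a packaging result. The only step that is not entirely mechanical is finite--dimensionality of $X$, and even there the local product structure makes the bound $\dim X \leq \dim C + 1$ immediate.
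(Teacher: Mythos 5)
Your proposal is correct and follows exactly the paper's route: the paper's proof is precisely a direct application of Theorem~\ref{generalnonivert}, with compactness, minimality of $\phi_t$ for residual $t$, connectedness, and aperiodicity (from $C$ being infinite) noted in the paragraph preceding the corollary. You merely spell out a few of these verifications (finite--dimensionality via the local product structure, aperiodicity via the absence of periodic points for a minimal homeomorphism of an infinite space) in more detail than the paper does.
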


In the following section we will make use of this construction with $C$  the Cantor set.
\begin{exmp}
Let $\phi$ be the suspension flow of an adding machine and $X$ be its phase space. Then $X$ is a solenoid. By Corollary~\ref{r2:t2} we obtain that $X$
admits a noninvertible dynamical system. Such a system was constructed before in \cite{Tywoniuk} by a completely different, long and technical argument.
\end{exmp}

\begin{exmp}
Let $\Sigma$ be a Kuperberg Minimal Set of the smooth flow $\phi$ on $\mathbb{S}^3$ without a closed orbit, first constructed in \cite{KK}.  The space $\Sigma$ is a continuum with unstable shape \cite{Hurder} that admits a noninvertible minimal map $f\colon \Sigma\to \Sigma$.
\end{exmp}

\begin{exmp}
Consider the standard Denjoy minimal set for circle homeomorphism $h_{\alpha}$ obtained by a blow-up of $n$ orbits in the irrational circle rotation by $\alpha$ (e.g. see \cite{Dev}). Clearly this minimal set is the unique invariant Cantor set $C$ for $h_{\alpha}$. Let $\phi_\alpha$ be the suspension flow of $h_{\alpha}|_C$  and $X(\alpha)$ be its phase space. Then $X(\alpha)$ belongs to the class of so-called Denjoy continua. This is a well-studied class of spaces, whose elements played a crucial role in Schweitzer's counterexample to the Seifert Conjecture \cite{Schweitzer}.  By Corollary~\ref{r2:t2} we obtain that each $X(\alpha)$ admits a noninvertible minimal dynamical system. Alternatively, we could realise these as minimal subsets of flows on the torus.
\end{exmp}

\begin{rem}
It is well known that the only compact surfaces admitting a minimal system are the 2-dimensional torus and the Klein bottle (e.g. see \cite[Theorem~3.17]{Blokh}). On the other hand, every flow on the Klein bottle has a closed orbit (e.g. see \cite{Markley}, cf. \cite{Knes}). Hence, $\mathbb{T}^2$ is the only compact surface admitting a minimal flow, and so this is the only closed surface to which Theorem~\ref{generalnonivert} applies. It is not clear (beyond tori and related examples) which higher--dimensional closed manifolds admit minimal flows -- even the case of $S^3$ remains unknown.
\end{rem}
\section{A minimal continuum $Y$ such that $Y^n$ is not minimal for all $n>1$}

In this section we adapt the technique of the previous section to replace the inserted arcs with pseudo--arcs to create a space with special properties. Recall that in \cite{Downarowicz} Downarowicz, Snoha and Tywoniuk introduced the notion of Slovak spaces. We say that a compact $X$ is a \textit{Slovak space} if its homeomorphism group $H(X)$ is cyclic and generated by a minimal homeomorphism. In the present section we shall appeal to a slightly broader class of spaces which we define as follows. We say that a compact space $X$ is an \textit{almost Slovak space} if its homeomorphism group $$\mathrm{H}(X)=\mathrm{H}_+(X)\cup \mathrm{H}_{-}(X),$$
with
$$\mathrm{H}_{+}(X)\cap \mathrm{H}_{-}(X)=\{\operatorname{id}_X\},$$
where $\mathrm{H}_+(X)$ is cyclic and generated by a minimal homeomorphism, and for every $g\in \mathrm{H}_{-}(X)$ we have $g^2\in \mathrm{H}_{+}(X)$.

Let $h\colon C\to C$ be a minimal homeomorphism of a Cantor set $C$.
%Put $X=C\times \R /_\sim$ where $\sim$ is an equivalence relation such that
%$(x,y)\sim (p,q)$ provided that $x-p\in \Z$ and $y=h^{-x+p}(q)$. It is clear that $X$ is a continuum, since $\varphi$ has a dense orbit and so some (in fact all) composant is dense in $X$.
%Then \textit{suspension flow} defined by $\varphi$
%is a continuous flow induced on $X$ by $\phi_t(x,s)=(x,s+t)/_\sim$.
%By \cite{Fayad} there exists a residual set $A$ of parameters $t$ such that $\phi_t$ is minimal. By a \textit{minimal suspension} of $\phi$ we mean any homeomorphism $h=\phi_t$ for $t\in A$.
%
%
Now we shall adapt the approach from the proof of Theorem \ref{generalnonivert}, to construct an almost Slovak space $Y$. We shall later show that with the appropriate choice of $h$ $Y^n$ is minimal only if $n=1$. 

Recall that the \emph{composant} of the point $x$ of the space $X$ is the union of all proper subcontinua of $X$ containing $x.$

\begin{thm}\label{thm:h}
Let $(C,h)$ be a minimal homeomorphism of a Cantor set $C$. There exists a minimal suspension $(X,F)$ of $(C,h)$, a continuum $Y$, a minimal homeomorphism $(Y,H)$ and a factor map $\pi \colon (Y,H)\to (X,F)$ such that:
\begin{enumerate}[(i)]
	\item $\pi$ is almost 1-1,
	\item all non-singleton fibers $\pi^{-1}(q)$ are pseudo-arcs,
	\item there exists a composant $W\subset Y$ such that if $|\pi^{-1}(x)|>1$ then $\pi^{-1}(x)\subset W$.
	\item $\lim_{|i|\to \infty}\diam H^i\left(\pi^{-1}(x)\right)=0$ for all $x$
\end{enumerate}
\end{thm}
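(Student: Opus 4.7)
The plan is to imitate the blow-up construction of Theorem~\ref{generalnonivert}, applied in its invertible form suggested by Remark~\ref{rem:r1} (blowing up the full two-sided orbit of a basepoint), and then to refine the inverse limit so that the inserted intervals are replaced by pseudo-arcs. I fix $t_0$ so that $F=\phi_{t_0}$ is a minimal suspension of $(C,h)$ on the phase space $X$ of the suspension flow, pick $x_0\in X$ and set $x_n=F^n(x_0)$ for $n\in\Z$. Following Theorem~\ref{generalnonivert} together with Remark~\ref{rem:r1}, I construct a tower $X_0=X,\,X_1,\,X_2,\dots$ in which $X_n$ is obtained from $X$ by compactifying $X\setminus\{x_{-n},\dots,x_n\}$ with intervals $I_{-n},\dots,I_n$, each identified with $[-1,1]$ via the metric $D$ built from the flow-box coordinates. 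By Brown's theorem each $X_n$ is homeomorphic to $X$, and $F$ lifts to a minimal homeomorphism $F_n$ of $X_n$ that sends $I_i$ homeomorphically onto $I_{i+1}$ for $|i|<n$.

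To turn intervals into pseudo-arcs I modify the bonding maps $X_{n+1}\to X_n$. Rather than simply collapsing the two newly inserted intervals $I_{\pm(n+1)}$ to the points $x_{\pm(n+1)}$ (which would give back a space homeomorphic to $X$), I define bonding maps $\tilde{\pi}_n\colon X_{n+1}\to X_n$ that outside all intervals agree with the natural projection, collapse $I_{\pm(n+1)}$ to $x_{\pm(n+1)}$, and within each existing $I_i$ ($|i|\le n$) act by an $\varepsilon_n$-crooked self-map of $[-1,1]$ fixing both endpoints, with $\varepsilon_n\to 0$. To secure $F$-equivariance I fix the crooked self-map on $I_0$ and define the self-map on $I_i$ by $F^i$-conjugation. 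With $\{\varepsilon_n\}$ shrinking fast enough to meet Bing's criteria, the inverse limit of each interval-tower is a pseudo-arc. I set $Y=\varprojlim X_n$, with $\pi\colon Y\to X$ the induced projection and $H$ the induced homeomorphism of $Y$.

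Verification of (i)--(iv) is then as follows. For (i): any point of $X\setminus\{x_n:n\in\Z\}$ lies outside every interval and thus has a singleton fiber, and this complement is dense; minimality of $H$ follows exactly as in the proof of Theorem~\ref{generalnonivert} by checking density of the projected orbits $\{H^i(y)_n\}$ in each $X_n$. For (ii): by construction $\pi^{-1}(x_n)$ is the inverse limit of the $I_n$-tower, which is a pseudo-arc $P_n$. For (iii): let $A_n=\phi(x_{-n},[0,2nt_0])$ be the flow arc from $x_{-n}$ to $x_n$ in $X$; by aperiodicity each $A_n$ is a proper subcontinuum of $X$ containing $x_{-n},\ldots,x_n$, so $\pi^{-1}(A_n)$ is a proper subcontinuum of $Y$ containing $P_{-n},\ldots,P_n$ together with a common point $y_0\in P_0$; the composant $W$ of $y_0$ therefore contains every $P_m$, proving (iii). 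For (iv): the scaling in the metric $D$ can be chosen so that $\diam I_n\to 0$ as $|n|\to\infty$, whence $\diam H^i(P_0)=\diam P_i\to 0$.

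The main obstacle will be the consistent, $F$-equivariant choice of the crooked bonding maps: the crooked self-maps of $[-1,1]$ at successive stages must fix the endpoints (so that crookedness does not spill outside each interval and each interval keeps attaching correctly to the rest of $X_n$), must nest so that the inverse limit satisfies Bing's characterization of the pseudo-arc, and must be compatible with the $F$-conjugation $I_i\to I_{i+1}$ in a way that extends continuously across the bonding steps. Once this geometric combinatorics is set up, the remaining steps are a direct adaptation of the arguments used in the proof of Theorem~\ref{generalnonivert}.
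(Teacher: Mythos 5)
Your overall strategy --- blow up the full orbit of a point with a null sequence of arcs as in Theorem~\ref{generalnonivert} and Remark~\ref{rem:r1}, then pass to a further inverse limit that converts the arcs into pseudo-arcs, and verify (i)--(iv) much as in that earlier proof --- is the same as the paper's, and your arguments for (i), (iii) and (iv) are essentially the ones used there. The gap is in the mechanism you propose for converting arcs into pseudo-arcs. You define bonding maps $\tilde{\pi}_n\colon X_{n+1}\to X_n$ that agree with the natural projection off the inserted intervals and act by an $\varepsilon_n$-crooked self-map of $[-1,1]$ on each existing interval $I_i$. Such a map cannot be continuous: each $I_i$ is the remainder of a compactification of a punctured neighbourhood, so it has empty interior and \emph{every} point of $I_i$ --- not just its two endpoints --- is a limit of points of $X_{n+1}\setminus I_i$ on which your map is the identity. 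Continuity then forces the map to be the identity on all of $I_i$, whereas an $\varepsilon_n$-crooked surjection of $[-1,1]$ is necessarily far from the identity on the interior. Fixing the endpoints, which you single out as the main constraint, does not address this; as written, the bonding maps of your tower do not exist and the construction of $Y$ does not get off the ground.

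The paper circumvents exactly this obstruction by exploiting the local product structure of the suspension near an inserted arc: a neighbourhood $U$ of $I$ is homeomorphic to $(-\varepsilon,1+\varepsilon)\times C$ with $I\approx[0,1]\times\{0\}$ and $C$ a Cantor set. One fixes a single Henderson-type map $g$ with $\varprojlim([0,1],g)$ the pseudo-arc, extends it by the identity outside $(0,1)$, and applies $(x,y)\mapsto(g(x),y)$ only on the tube $[0,1]\times C_n$, where $(C_n)$ is a nested sequence of \emph{clopen} subsets of $C$ with $\bigcap_n C_n=\{0\}$; the map is the identity elsewhere. Because each $C_n$ is clopen, each such map $f_n^{(1)}$ is continuous, and in $\varprojlim(X,f_n^{(1)})$ every point off $I$ is eventually fixed while the fibre over $I$ becomes $\varprojlim(I,g)$, a pseudo-arc; one then takes the inverse limit of these spaces over the maps induced by the $\eta_n$. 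If you wish to retain a crooked-bonding-map formulation, you still need some such thickening into a clopen transverse direction (or the Barge--Martin near-homeomorphism device the paper invokes in Theorem~\ref{surf}, where the transverse direction is connected); without it the central step of the proof fails.
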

\begin{proof}
We may assume that $C\subset [-1,1]$ and $(-\delta,0)\cap C\neq \emptyset$ and $(0,\delta)\cap C\neq \emptyset$
for every $\delta>0$.
By the results of \cite{Fayad} we may also assume that $F=\phi_t$ of the suspension flow of $h$ for some $t$  so that the construction from Theorem~\ref{generalnonivert} can be carried out where instead of replacing the points of only the backward orbit of a particular point $q$ by arcs, we replace all points in the orbit of $q$ with a null sequence of arcs (see Remark~\ref{rem:r1} and Corollary~\ref{r2:t2}).

More specifically, fix any $q\in X$. By Theorem~\ref{generalnonivert} there exists an almost 1-1 extension $(X,H)$ of $(X,F)$ and a sequence of maps $(X,\pi_n)$, $(X,\eta_n)$ (used to construct $H$ by an inverse limit technique) such that $\pi_n \colon X\to X$ is one-to-one except on the set $A_n=\{ F^i(q) : |i|<n\}$, where each $\pi_n^{-1}(y)$, $y\in A_n$ is an interval. Furthermore $\pi_{n+1}=\pi_n \circ \eta_n$, where $\eta_n$ is one-to-one at every point except on two intervals $\pi^{-1}_n(F^{-n}(q)),\pi^{-1}_n(F^{n}(q))$, each of which is collapsed to a distinct point. Note that for sufficiently small $\delta>0$ the map $\eta_n$ is invertible on the complement of the set $\pi_n^{-1}(B(A_n,\delta))$.

Let $g\colon [0,1]\to [0,1]$ be a map such that $g(0)=0$, $g(1)=1$
and the inverse limit of $[0,1]$ with $g$ as the sole bonding map is the pseudo-arc (e.g. see \cite{Henderson} or \cite{Minc}).
Extend $g$ to a continuous surjection on $[-1,2]$ by putting $g(x)=x$ for all $x\not\in (0,1)$.

Put $X_0=X$. Let $I=\pi_1^{-1}(q)$ and let $U$ be a sufficiently small neighborhood of $I$
homeomorphic to $(-\eps,1+\eps)\times C$, where $C$ is a Cantor set and $I\approx [0,1]\times \{0\}$.
There exists a nested sequence of clopen sets $C_k \subset C$ such that $\bigcap_k C_k=\{0\}$.

Now for each $n$ let $f_n^{(1)}\colon X\to X$ be a map such that $f_n^{(1)}|_{X\setminus U}=\text{id}$,
$f_n^{(1)}(x,y)=(x,y)$ for $(x,y)\in U$, $y\not\in C_n$, and $f_n^{(1)}(x,y)=(g(x),y)$ otherwise. Let
$X_1=\varprojlim ((X,f_n^{(1)})_{n=1}^\infty)$. Observe that for each $x\not\in I$ there exists an $N$, and an open neighborhood $V$ of $(x,y)$, such that $f_n^{(1)}(x',y)=(x',y)$ for all $(x',y)\in V$ and $n>N$. But since a finite number of the first coordinates in a sequences from an inverse limit does not affect the topological structure of $X_1$, we see that if $x\in X_1$ and $x_n\not \in I$ for every $n$, then a small neighborhood of $x$ is homeomorphic to $C\times (0,1)$. If, on the other hand, $x_n\in I$ for some (thus all) $n$
then $x\in \varprojlim ((I,f_n^{(1)})_{n=1}^\infty)=\varprojlim (I,g)$, which is a unique maximal pseudo-arc embedded in $X_1$. We may view $X_1$ as $X$ with removed $q$ and the resulting ``hole'' compactified by a pseudo-arc. Note that we have a natural projection $\xi_1\colon X_1\to X$ given by $(\xi_1(x))_n=\pi_1(x_n)$, where we identify $X=\varprojlim (X,\text{id})$.

By the same method we define $X_n$ and $\xi_n\colon X_n\to X$ which is one-to-one except on the set $A_n,$ where $\xi_n^{-1}(y)$ is a pseudo-arc for every $y\in A_n$. Furthermore observe that $\eta_n$
induces a natural projection $\gamma_n \colon X_{n+1}\to X_n$ which collapses each of two pseudo-arcs to a point.

Let $Y=\varprojlim ((X_n,\gamma_n)_{n=1}^\infty)$ and let $\pi \colon Y\to X$ be the natural projection induced by the maps $\xi_n$.
Observe that $\pi$ is one-to-one onto every point that does not belong to the orbit of $q$, and $\pi^{-1}(F^i(q))$ is a pseudo-arc for every $i\in\mathbb{Z}$. Furthermore, all composants of $Y$ are topological one-to-one images of the real line, except the composant $W$ containing countably many pseudo-arcs connected by arcs (note that in $\varprojlim ([-\delta,1+\delta],g)$
the constant sequences $x=(0,0,\ldots)$ and $x=(1,1,\ldots)$ belong both to the pseudo-arc $\varprojlim ([0,1],g)$ and the arcs $\varprojlim ([-\delta,0],g)$ and $\varprojlim ([1,1+\delta],g)$, respectively).

It remains to define a homeomorphism $H\colon Y\to Y$. For each $y\in Y$ such that $\pi(y)\not\in \{F^i(q) : i\in \Z\}$ we put $H(y)=\pi^{-1}\circ F \circ \pi(y)$.
If $\pi(y)=F^i(q)$ then take any $n>i$ and denote $I^n_i=\pi_n^{-1}(F^i(q))$ and $I^n_{i+1}=\pi_n^{-1}(F^{i+1}(q))$. By the method of our construction we have a linear map $H_n\colon I^n_i\to I^n_{i+1}$
that extends to a continuous map in a neighborhood of $I^n_i$ in such a way that $\pi_n\circ H_n\circ \pi_n^{-1}(y)=F(y)$ for all $y$ sufficiently close to $F^i(q)$.
Furthermore, for each $n>i$ we have $H_n \circ \gamma_n(x)=\gamma_n \circ H_n(x)$ for each $x\in I_i^n$. This way $H$ is defined also on $\pi^{-1}(F^i(q))$ for every $i\in \Z$, where it is a homeomorphism, since $H_n$
is a homeomorphism onto its image on the set $X_n\setminus \{\pi_n^{-1}(F^n(q)),\pi_n^{-1}(F^{-n-1}(q))\}$. Note that each pseudo-arc in $X_n$ has empty interior in $X_n$, hence the set of points $y$
such that $\pi^{-1}\pi(y)=\{y\}$ is residual in $Y$, thus demonstrating that $\pi$ is almost 1-1. It is clear from the construction that all non-singleton fibers $\pi^{-1}(x)$ are pseudo-arcs and that all pseudo-arcs are contained in the same composant $W.$ An almost 1-1 extension of a minimal system is always minimal, and so $H$ is minimal. The condition $\lim_{|i|\to \infty}\diam H^i\left(\pi^{-1}(x)\right)=0$ for all $x$ is a direct consequence of the construction. The proof is completed.
\end{proof}

\begin{thm}\label{nonminsq}
There exists a compact connected metric space $Y$ admitting a minimal homeomorphism such that $Y\times Y$ does not admit a minimal homeomorphism.
\end{thm}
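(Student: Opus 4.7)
The plan is to apply Theorem~\ref{thm:h} to a minimal Cantor system $(C,h)$ chosen so that the suspension is maximally rigid---for concreteness, an odometer---and then combine the rigidity of the resulting continuum $Y$ with a factorwise rigidity argument for $Y\times Y$ to rule out every candidate minimal homeomorphism of the square. With $(C,h)$ an odometer, the suspension $(X,F=\phi_t)$ is a solenoid carrying the structure of a compact abelian group on which the flow acts by translation along a one-parameter subgroup $\mathbb{R} v$. Applying Theorem~\ref{thm:h} produces $(Y,H)$ as an almost 1-1 extension of $(X,F)$; non-singleton fibers of $\pi:Y\to X$ form a null sequence of pseudo-arcs in a single composant $W\subset Y$, and $\diam H^i(\pi^{-1}(x))\to 0$ as $|i|\to\infty$.

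Next, I would show that $Y$ is almost Slovak and that $Y\times Y$ has factorwise rigid self-homeomorphism group. The pseudo-arcs are the only nondegenerate hereditarily indecomposable subcontinua of $Y$, so any $g\in\mathrm{H}(Y)$ permutes them and preserves $W$; the shrinking-diameter condition then forces a uniform shift of the orbit index of the blown-up point $q$, so $g$ descends to $\bar g\in\mathrm{H}(X)$ commuting with $F$ up to the canonical flow-reversing involution $\tau$. Rigidity of the odometer gives $\bar g\in\langle F\rangle\cup\langle F\rangle\tau$, lifting to $g\in\langle H\rangle\cup\langle H\rangle\Sigma$ for a canonical involution $\Sigma$, so $\mathrm{H}(Y)\cong\mathbb{Z}$ or $\mathbb{Z}\oplus\mathbb{Z}_2$. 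For $Y\times Y$: the distinguished set $(W\times Y)\cup(Y\times W)$ must be preserved by any $\Phi\in\mathrm{H}(Y\times Y)$, and the slices $P\times\{y\}$ and $\{x\}\times P$ (with $P$ a fiber pseudo-arc) are the maximal proper nondegenerate hereditarily indecomposable subcontinua of $Y\times Y$. The null-sequence condition together with continuity of $\Phi$ forces a uniform choice of slice-orientation, yielding the split or flipped form $\Phi=(g_1,g_2)$ or $(g_1,g_2)\circ\mathrm{flip}$ with $g_i\in\mathrm{H}(Y)$.

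Finally, I would rule out minimality in each case. Since $Y\times Y$ is connected, minimality of $\Phi$ implies minimality of $\Phi^2$; iterating and composing with involutions reduces everything to the split case $(H^m,H^n)$. Projecting via $\pi\times\pi$, minimality of $\Phi^2$ forces minimality of $(X\times X,F^m\times F^n)$. But $F^m\times F^n$ is translation by $(mtv,ntv)$ in the 2-dimensional compact abelian group $X\times X$, and $\mathbb{Z}(mtv,ntv)$ is contained in the 1-parameter subgroup $\mathbb{R}(mv,nv)$, whose closure is a \emph{proper} closed subgroup of $X\times X$. Hence every $(F^m\times F^n)$-orbit is confined to a translate of this proper subgroup and cannot be dense, which excludes minimality of $\Phi^2$ and thereby of $\Phi$.

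The main obstacle is factorwise rigidity for $Y\times Y$: the continuum $Y$ itself is not hereditarily indecomposable, with pseudo-arcs appearing only as slices inside a larger structure, so one must leverage the null-sequence of fibers, the distinguished composant $W$, and the classical rigidity of individual pseudo-arcs in tandem to force any self-homeomorphism of the square into coordinate-wise form. A secondary delicate point is step two, where one must rule out exotic self-homeomorphisms that could mix the pseudo-arc structure with the odometer dynamics in a nontrivial way.
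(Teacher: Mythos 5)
Your overall strategy coincides with the paper's: apply Theorem~\ref{thm:h} to a rigid Cantor base, prove that every homeomorphism of $Y\times Y$ has a power that splits as $g_1\times g_2$, push $g_1,g_2$ down to powers $F^{k_1},F^{k_2}$ of the suspension map, and kill minimality by rational dependence. Your first and third steps are sound; in particular the final step, done directly in the solenoid as a compact abelian group (the orbit of $F^m\times F^n$ lies in a coset of the proper closed subgroup $\overline{\mathbb{R}(mv,nv)}$), is a clean and correct substitute for the paper's passage to the circle-rotation factor, and the choice of an odometer instead of a Denjoy Cantor system is immaterial there.

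The genuine gap is in your factorwise-rigidity step for $Y\times Y$. The claim that the slices $P\times\{y\}$ and $\{x\}\times P$ are \emph{the} maximal proper nondegenerate hereditarily indecomposable subcontinua of $Y\times Y$ is false: the diagonal $\{(x,x):x\in P\}\subset P\times P$ is homeomorphic to $P$, hence hereditarily indecomposable, and is contained in no slice; worse, if $f\colon P\to A$ is a continuous surjection onto a connecting arc, the set $\{(f(p),p):p\in P\}\subset A\times P$ is a copy of $P$ straddling an $(ap)$ tile with nondegenerate projection onto the arc factor. So nothing a priori prevents a homeomorphism of $Y\times Y$ from carrying a slice onto such a ``graph'' continuum, and your ``uniform choice of slice-orientation'' does not follow from the null-sequence condition alone. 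The paper's mechanism is different and is the essential point you are missing: it tiles $W\times W$ by squares of types $(aa)$, $(pp)$, $(ap)$, $(pa)$ and uses that a pseudo-arc contains no nondegenerate locally connected subcontinuum, so no $(pp)$ square can contain a copy of an $(aa)$ or $(ap)$ square and no $(ap)$ square can contain an $(aa)$ square; this forces $G$ to preserve tile types and corner points, hence the diagonal lattice $\ldots(aa)(pp)(aa)\ldots$, and only then does one get $G^n=g_1\times g_2$ on the dense set of corner points and, by continuity, everywhere. The arcs $A$ connecting consecutive pseudo-arcs in $W$ --- which your argument never uses --- are exactly what makes this hierarchy of tiles rigid; an argument phrased purely in terms of hereditarily indecomposable subcontinua cannot see it. (A secondary, patchable imprecision: ``rigidity of the odometer'' does not by itself give $\bar g\in\langle F\rangle\cup\langle F\rangle\tau$, since the solenoid has a huge homeomorphism group including all translations; one must additionally use that $\bar g$ preserves the blown-up orbit of $q$.)
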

\begin{proof}
\noindent
Let $h$ be the homeomorphism induced on a minimal Cantor set by a Denjoy extension of an irrational rotation of the unit circle and let $F$ be a minimal suspension of $h.$
Let $(Y,H)$ be a minimal dynamical system provided by an application of Theorem~\ref{thm:h}. By the construction of $Y$, all composants of $Y$ but one are one-to-one continuous images of $\R.$ There is also one special composant $W$ which consists of countably many pseudo-arcs connected by arcs.

We shall show that for any homeomorphism $G\colon Y\times Y\to Y\times Y$
there is an $n>0$ such that $G^n\colon Y\times Y\to Y\times Y$ is of the form $G^n(a,b)=(g_1(a),g_2(b))$, with $g_1,g_2\colon Y\to Y$ homeomorphisms. Since composants are dense in a continuum, it is enough to consider the composant $W\times W$.
\begin{figure}[h]
	\centering
		\includegraphics[width=0.60\textwidth]{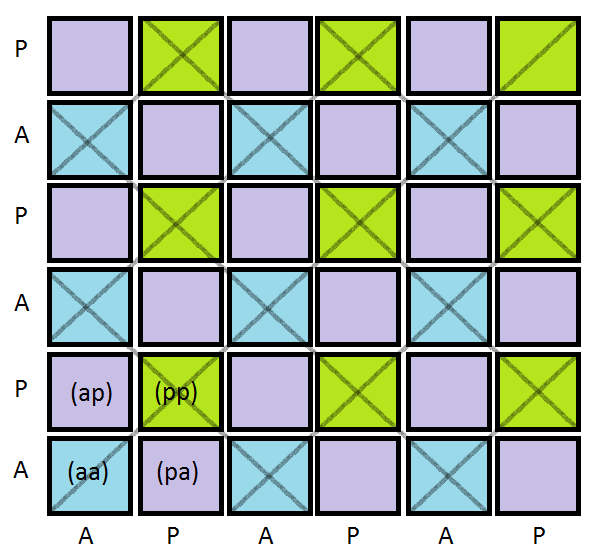}
	\label{fig:patiles}
	\caption{The structure of $W\times W$}
\end{figure}
This composant $W\times W$ is ``tiled" with the following 4 types of ``squares'': $(aa)$ $A\times A$, $(pp)$  $P\times P$, $(pa)$ $P\times A$ and $(ap)$ $A\times P$, where $P$ is a (maximal) pseudo-arc in $W$ and $A$ is an arc that connects two such pseudo-arcs. Note that any square of type $(ap)$ is homeomorphic with any square of type $(pa).$
\begin{claim}\label{pp}
No square of type $(pp)$ contains a homeomorphic copy of a square of type $(aa)$ or $(ap).$
\end{claim}
To prove the claim, suppose on the contrary that a square $\mathcal{T}$ of type $(pp)$ contains a copy of a square of type $(aa)$ or $(ap).$ Then $\mathcal{T}$ contains an arc $R$. Since the projection of $R$ onto one of the two coordinate spaces must be nondegenerate, it follows that a subcontinuum of $P$ is a continuous image of the arc. This contradicts the fact that $P$ does not contain any locally connected continuum, thereby establishing the claim.

\noindent
Applying similar arguments we also obtain the following.
\begin{claim}
No square of type $(ap)$ contains a homeomorphic copy of a square of type $(aa).$
\end{claim}

For simplicity of notation, by convention we will write $G(pp)=(pp)$ to denote that the image of a square of type $(pp)$ is contained in a square of type $(pp).$
It follows by the above claims that $G(pp)=(pp)$, $G(aa)=(aa)$, and $G(ap)=(ap)$ or $G(ap)=(pa)$. We call a point $x$ of a square of type $(pp)$ a \emph{corner point} if it also belongs to a square of type $(aa)$. By the above discussion, $G$ must transform corner points to corner points.
Therefore a diagonal in the lattice of corners of the form $$...(aa)(pp)(aa)(pp)(aa)...$$ must be mapped onto a diagonal of the same form,
because squares of type $(aa)$ may not ``cross'' squares of any other type. Then there is an $n\in \{1,2\}$ such that $G^n$ is a translation of diagonals, because if it translates one diagonal then, by the rigid structure of the diagonals
and corner points, it must also translate all other diagonals in exactly the same way. Denote by $E\times E$ the set of all corner points in $W\times W$ and observe that $E$ is dense in $Y$. By the above discussion there are homeomorphisms $g_1,g_2\colon E\to E$ such that $G^n|_{E\times E}=g_1\times g_2$. But then $g_1\times g_2$ can be extended continuously to $Y\times Y$ in a unique way, so $G^n=g_1\times g_2$, as claimed.

Now, by way of contradiction suppose $Y\times Y$ admits a minimal homeomorphism $G\colon Y\times Y$. By the above, we have $G^n=g_1\times g_2$ for some $n>0.$ Since $Y\times Y$ is a continuum, minimality of $G$ implies that $G^n$ is minimal as well. Then for simplicity we may assume that $n=1$. Note that since each $g_i$ is a homeomorphism, following the same lines of argument as above, $g_i$ must preserve the structure $\ldots (p)(a)(p)(a)(p)\ldots$ of the special composant $W$. Since the pseudo-arc has the fixed point property \cite{fpp}, the homeomorphism $g_i$ cannot permute a finite number of pseudo-arcs in $W$. This shows that $g_i$ must preserve the ordering of pseudo-arcs in $W$. More formally, if we enumerate consecutive pseudo-arcs in $W$, say $\{P_i\}_{i\in \Z}$ with $\pi(P_i)=F^i(q)$, then there are $k_1,k_2\neq 0$ such that $g_1(P_i)=P_{i+k_1}$ and $g_2(P_i)=P_{i+k_2}$. By our construction we see that $\lim_{|i|\to \infty}\diam P_i=0$ and
hence the relation $x\sim y$ iff $x=y$ or $x,y\in P_i$ for some $i$ is closed equivalence relation, and so $g_1$ induces a homeomorphism $f_1\colon X\to X$. But $f_1$ coincides with $F^{k_1}$ on a dense set
and so $f_1=F^{k_1}$. By the same argument $g_2$ is an extension of $F^{k^2}$. Since each suspension minimal system from a Denjoy homeomorphism is an extension of a rotation $R$ by an $\alpha\in \R\setminus \mathbb{Q}$ on the unit circle, we obtain that $G$
is an extension of the map $R^{k_1}\times R^{k_2}$ defined on the two dimensional torus. But the numbers $k_1\alpha$ and $k_2 \alpha$ are rationally dependent,
and hence $R^{k_1}\times R^{k_2}$ is not minimal (e.g. see \cite{Walters}). But any factor of a minimal homeomorphism has to be minimal, proving that $G$ is not minimal.
\end{proof}
\begin{rem}
It is clear from the proof that the Cartesian product of $n$ copies of $Y$, is not minimal for any $n>1$.
\end{rem}
The above construction, although carried out in an abstract setting, have a particularly nice realization, if we take into account that the suspension that we start with is a Denjoy minimal set for a torus homeomorphism \cite{Schweitzer}. Note that there exists uncountably many such nonhomeomorphic Denjoy minimal sets. 
\begin{thm}\label{surf}
There exists a compact connected metric space $Y$, and a torus homeomorphism $\varphi:\mathbb{T}^2\to \mathbb{T}^2$ homotopic to the identity, such that $Y$ is a minimal set of $\varphi$, but $Y\times Y$ does not admit a minimal homeomorphism.
\end{thm}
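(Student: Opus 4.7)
The plan is to realize the abstract construction of Theorem~\ref{thm:h} geometrically inside $\mathbb{T}^2$, using the classical fact that a Denjoy continuum sits naturally as a minimal set of a torus homeomorphism homotopic to the identity. Start with a Denjoy circle homeomorphism $h_\alpha$ with Cantor minimal set $C\subset S^1$. Its mapping torus is $\mathbb{T}^2$, and the associated suspension flow $\phi$ has the Denjoy continuum $X$ as a minimal set. Choose $t_0$ generic so that $F_0:=\phi_{t_0}$ is a torus homeomorphism minimal on $X$; as a time map of a flow starting at $\mathrm{id}$, $F_0$ is isotopic, and hence homotopic, to the identity.

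Fix $q\in X$ as in Theorem~\ref{thm:h} and choose pairwise disjoint closed topological disks $D_i\subset\mathbb{T}^2$, $i\in\mathbb{Z}$, each containing $F_0^i(q)$ in its interior, with $F_0(D_i)=D_{i+1}$ and $\diam D_i\to 0$ as $|i|\to\infty$. This is achieved by taking a sufficiently small disk $D_0$ contained in a flow box through $q$, propagating by $F_0$, and then shrinking both tails so the sequence is null. Inside each $D_i$ embed a pseudo-arc $P_i\subset D_i$ whose two accessible endpoints coincide with the two points where the local flow arc through $F_0^i(q)$ meets $\partial D_i$; the existence of such an embedding follows from the standard fact that the pseudo-arc embeds in arbitrarily small planar disks and is homogeneous with respect to any two prescribed accessible points. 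Put
\[
Y\;=\;\Bigl(X\setminus\bigcup_{i\in\mathbb{Z}}\mathrm{int}\,D_i\Bigr)\;\cup\;\bigcup_{i\in\mathbb{Z}}P_i\,\subset\,\mathbb{T}^2.
\]
Since the pseudo-arcs form a null sequence and are glued along the prescribed endpoint pairs, $Y$ is a continuum, and collapsing each $P_i$ to a point recovers the almost $1$-$1$ factor map onto $X$ from Theorem~\ref{thm:h}. Since the abstract $Y$ of Theorem~\ref{thm:h} is characterized by this data, the two continua are homeomorphic.

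Define $\varphi\colon\mathbb{T}^2\to\mathbb{T}^2$ to equal $F_0$ outside $\bigcup\mathrm{int}\,D_i$, and on each $D_i$ to be a homeomorphism onto $D_{i+1}$ that agrees with $F_0$ on $\partial D_i$ and restricts to $H|_{P_i}\colon P_i\to P_{i+1}$, where $H$ is the homeomorphism of Theorem~\ref{thm:h} transported through the identification above. Continuity of $\varphi$ across the accumulation set of the $D_i$ is automatic because $\diam D_i\to 0$. To see that $\varphi$ is homotopic to the identity, note that on each $D_i$ the two homeomorphisms $\varphi|_{D_i}$ and $F_0|_{D_i}$ from $D_i$ to $D_{i+1}$ agree on $\partial D_i$, so they differ by a homeomorphism of $D_{i+1}$ fixing the boundary and are thus isotopic relative to the boundary by the Alexander trick; these local isotopies combine into a global isotopy from $\varphi$ to $F_0$ since the $D_i$ are disjoint and shrink to zero. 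By construction $\varphi(Y)=Y$ and $\varphi|_Y$ is conjugate to $H$, so $Y$ is a minimal set of $\varphi$. Theorem~\ref{nonminsq} then gives that $Y\times Y$ admits no minimal homeomorphism.

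The main technical obstacle is the local extension step: constructing, inside each pair of disks $D_i,D_{i+1}$, a disk homeomorphism that simultaneously extends the prescribed boundary map (forced by $F_0$) and the prescribed map on the embedded pseudo-arcs (forced by $H$). The argument exploits hereditary indecomposability and the fact that the pseudo-arc does not separate its containing disk, allowing one to extend the map cell-by-cell through the complementary region using planar Schoenflies-type extension. A secondary point, already implicit, is the coherence of the many pseudo-arc insertions: one must verify that the choices of $P_i$ and the extensions over $D_i$ can be made compatible with the $F_0$-orbit structure, which is handled by first choosing $P_0$ and its extension over $D_0\to D_1$ freely and then transporting by $F_0$ to define $P_i$ and $\varphi|_{D_i}$ for all $i\neq 0$.
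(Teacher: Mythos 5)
Your overall strategy---realize the space of Theorem~\ref{thm:h} inside $\mathbb{T}^2$ and then invoke Theorem~\ref{nonminsq}---is the right one, but the step you yourself flag as ``the main technical obstacle'' is a genuine gap, and the sketch you offer for it does not close it. You need, for each $i$, a homeomorphism $D_i\to D_{i+1}$ that simultaneously (a) agrees with $F_0$ on $\partial D_i$ and (b) restricts on the embedded pseudo-arc $P_i$ to the \emph{specific} homeomorphism $H|_{P_i}$ coming from the inverse-limit construction. There is no ``planar Schoenflies-type extension'' here: the complement of a pseudo-arc in a disk is a single open region with a wildly complicated prime-end structure, not a collection of cells one can fill in one at a time, and hereditary indecomposability gives you no extension mechanism. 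Extending homeomorphisms of the pseudo-arc to homeomorphisms of the plane is a genuinely hard problem (it is the subject of deep work of Lewis, and depends on the embedding); even where such extensions exist, nothing guarantees they can be chosen with prescribed boundary values on $\partial D_i$, nor that the particular maps $H|_{P_i}$ are among the extendable ones for the particular embeddings you chose. A secondary unproved assertion is that your surgered continuum is homeomorphic to the $Y$ of Theorem~\ref{thm:h} (``characterized by this data'' is not an argument), and the claim that a pseudo-arc can be embedded with two prescribed ``accessible endpoints'' matching the entry points of the flow arc also needs justification.

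The paper avoids all of this by never posing an extension problem. It reruns the proof of Theorem~\ref{thm:h} with the maps $f^{(1)}_n$ redefined, via the Barge--Martin technique, as \emph{near-homeomorphisms of the whole torus}: $f^{(1)}_n$ is the identity off a shrinking neighborhood $U_n=[-1/n,1+1/n]\times[-1/n,1/n]$, is a homeomorphism except on the arc $[0,1]\times\{0\}$, and acts there by the Henderson map $g$. Since the bonding maps are near-homeomorphisms, the inverse limit is homeomorphic to $\mathbb{T}^2$ by Brown's theorem, the arc is replaced by a pseudo-arc inside the torus, and the dynamics (both the analogues of the $\gamma_n$ and the final homeomorphism $\varphi$) are induced functorially on the inverse limit, so continuity and invertibility come for free rather than from a disk-extension argument. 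If you want to salvage your approach, you would have to either prove the required extension theorem for pseudo-arcs in disks with prescribed boundary data---which is not available---or switch to the inverse-limit/near-homeomorphism formulation as the paper does.
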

\begin{proof}
Let $\phi$ be a torus homeomorphism homotopic to the identity, with a minimal Denjoy continuum $X$. The proof is the same as in Theorem \ref{nonminsq}, with the exception that the proof of Theorem \ref{thm:h} needs a small adjustment (the rest is analogous), which we describe below. Let $g$ be defined as in the proof of Theorem \ref{thm:h}. We redefine each set $C_n$ putting $C_n:=[0-\frac{1}{n},0+\frac{1}{n}]$. Note that we still have $\bigcap C_n=\{0\}$. We put $U_n=[-1/n,1+1/n]\times C_n$ and use the Barge-Martin technique (see \cite{BMAttr}) to redefine each $f^{(1)}_n\colon\mathbb{T}^2\to\mathbb{T}^2$ to be a near-homeomorphism, such that $f_n|_{\mathbb{T}^2\setminus U_n}\equiv\operatorname{id}$ and $f^{(1)}_n$ is a homeomorphism everywhere, except on $[0,1]\times\{0\}$, where $f^{(1)}_n(x,0)=(g(x),0)$ and $g$ is the Henderson map \cite{Henderson}. Since each $f^{(1)}_n$ is near-homeomorphism, their inverse limit is homeomorphic to $\mathbb{T}^2$. This way we ``replace'' an arc in $\mathbb{T}^2$ by a pseudo-arc. All further steps of the construction are updated accordingly.
\end{proof}
\begin{que}
Does there exist a minimal space $Y$, such that $Y^m$ is minimal for some $m>1$, but $Y^k$ is not minimal for some $k>m$?
\end{que}

\section{Local product structure}

While we have only applied our technique to spaces supporting a flow, in fact the technique only relies on having a map that preserves a local product structure along with the ``angular'' relation. In the case of flows, this was accomplished by using the local translation structure, but this can also be achieved with certain classes of smooth maps.  A large class of skew products provide examples of such maps as we illustrate below. In addition to the intrinsic interest of the extension of the construction, it allows us to produce an example of a minimal but noninvertible map of the Klein bottle, which is known not to admit a minimal flow. This raises the possibility that the technique could lead to similar examples on a significantly larger class of manifolds than those that admit minimal flows.

\begin{thm}\label{thm:skewt2}
Let the homeomorphism $F\colon \mathbb{T}^2\to \mathbb{T}^2$ be a skew product defined by $F(x,y)=(x+\alpha, y+r(x)),$ where $r$ is a differentiable function on $S^1$.
For every $z\in \mathbb{T}^2$ there exists a map
 $f\colon \mathbb{T}^2\to \mathbb{T}^2$ and an almost 1-1 factor map $\pi \colon (\mathbb{T}^2,f)\to (\mathbb{T}^2,F)$
such that $\pi^{-1}(y)$ is a singleton for every $y\not \in \{F^n(z) : n\in \Z\}$ and which is an arc otherwise. Furthermore $\lim_{|n|\to\infty}\diam \pi^{-1}(F^n(z))=0$.
\end{thm}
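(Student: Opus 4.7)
The plan is to mimic the proof of Theorem~\ref{generalnonivert}, substituting smooth charts adapted to the orbit of $z$ for the flow-box coordinates supplied there by the Bebutov theorem. Fix $z \in \mathbb{T}^2$ and write $z_n := F^n(z)$. Since $r$ is $C^1$, $F$ is a $C^1$ diffeomorphism of $\mathbb{T}^2$, so one can pick a small smooth chart $\phi \colon V \to \mathbb{R}^2$ around $z$ with $\phi(z) = 0$ and propagate it along the orbit by declaring $\phi_n := \phi \circ F^{-n}$ on $V_n := F^n(V)$. A direct computation gives $\phi_n(z_n) = 0$ and $\phi_{n+1} \circ F \circ \phi_n^{-1} = \id$ on the relevant overlap. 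This ``$F = \id$ in transition coordinates'' property plays the role that ``$F$ is a translation in flow-box coordinates'' plays in Theorem~\ref{generalnonivert}, and is precisely the ``local product structure preserved in a sufficiently smooth manner'' alluded to in the section's introductory paragraph.

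With these charts in hand, the blow-up is then carried out exactly as in the earlier proof: in each chart $\phi_n$, use the angular function $c(y) = y_1/\|y\|$ and the metric $D(x,y)=\|x-y\|+|c(x)-c(y)|$ on a small punctured Euclidean ball $B(0, r_n)\setminus\{0\}$, and glue the resulting completion interval $I_n \cong [-1,1]$ in place of $z_n$. Because $F$ acts as the identity in transition coordinates, the cones $c^{-1}(c_0)$ in $\phi_n(V_n)$ are carried onto the corresponding cones in $\phi_{n+1}(V_{n+1})$, so $F$ extends continuously to a homeomorphism $I_n \to I_{n+1}$ between consecutive arcs; alternatively, one may collapse a single chosen arc to its image point and produce a noninvertible $f$ as in the original proof. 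I would pick the radii $r_n$ to shrink to $0$ as $|n|\to\infty$ fast enough both to make the blow-up neighbourhoods pairwise disjoint at each finite stage and to guarantee $\lim_{|n|\to\infty}\diam \pi^{-1}(F^n(z))=0$.

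The inverse-limit step then runs verbatim as in the flow case: define $X_n$ to be $\mathbb{T}^2$ with the finitely many orbit points $z_{-n},\ldots,z_n$ blown up by the above construction, let the bonding maps $\pi_n \colon X_{n+1} \to X_n$ collapse the two newly introduced arcs monotonically, and appeal to Brown's theorem to conclude that $X_\infty := \varprojlim X_n$ is homeomorphic to $\mathbb{T}^2$. The map $f$ on $X_\infty$ is then the natural extension of $F$ through the arc maps described above; continuity at each arc follows from $F$ being the identity in chart coordinates, and the natural projection $\pi \colon X_\infty \to \mathbb{T}^2$ is injective off the countable set $\{z_n : n\in\mathbb{Z}\}$, hence is almost 1-1.

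The main obstacle is the first step: producing local coordinates in which $F$ preserves the cones used for the blow-up, so that $F$ extends continuously across the inserted arcs. In the Bebutov setting this is handed to us by the flow-box theorem, whereas here the differentiability of $r$ is exactly what lets us iteratively propagate a single chart along the orbit of $z$ and obtain the identity transition relation between consecutive charts. After this step, everything else -- monotonicity of the bonding maps, applicability of Brown's theorem, continuity of $f$, the almost 1-1 property of $\pi$, and the shrinking-diameter estimate -- is a direct translation of the corresponding steps in the proof of Theorem~\ref{generalnonivert}.
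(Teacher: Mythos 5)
Your construction goes through, but the crucial step --- extending $F$ continuously across the inserted arcs --- is handled by a genuinely different mechanism than in the paper. The paper stays in the fixed ambient coordinates of $\mathbb{T}^2$ for every point of the orbit: it notes that $F$ preserves vertical lines and that a sequence approaching $z=(x,y)$ along a line of slope $\beta$ is carried to a sequence approaching $F(z)$ with asymptotic slope $\beta+r'(x)$, so the induced map on the pencil of lines through the blown-up point is the continuous shear $\beta\mapsto\beta+r'(x)$; continuity of this assignment is what lets $F$ extend over the compactifying arcs. You instead transport one chart along the orbit via $\phi_n=\phi\circ F^{-n}$, so that $\phi_{n+1}\circ F\circ\phi_n^{-1}=\operatorname{id}$ and the extension over the arcs is tautologically the identity. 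Both routes are valid, but they buy different things. The paper's derivative computation exhibits the induced dynamics on the fibers in the ambient coordinates, which is the whole point of the section (a differentiable map preserving a local product structure carries the angular relation with it). Your moving-chart trick is slicker, but observe that it never actually uses the differentiability of $r$: any homeomorphism can be used to propagate a chart, the identity transition relation is formal, and the blow-up of a point with respect to an arbitrary homeomorphism chart still yields a space homeomorphic to $\mathbb{T}^2$. So your closing claim that ``the differentiability of $r$ is exactly what lets us iteratively propagate a single chart'' is inaccurate --- propagation is free --- and your argument in fact proves the stronger statement for an arbitrary homeomorphism of $\mathbb{T}^2$ (with $z$ of infinite orbit). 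That is a feature rather than a flaw, but you should either state the generalization explicitly or, if you want to reproduce the paper's point about smooth local product structure, work in the fixed chart and supply the slope computation above. A minor quibble: the hypothesis is only that $r$ is differentiable, not $C^1$; the paper's difference-quotient argument needs only pointwise differentiability, and yours, as noted, needs none.
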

\begin{proof}
Fix any $z=(x,y)\in \mathbb{T}^2$ and let $U$ be a small open neighborhood of $z.$ First observe that if $z_n=(x,y_n)\in U$ then $F(z_n)=(x+\alpha,y_n+r(x)),$
which means that $F$ preserves vertical lines in $U$. Now assume that $z_n$ converges to $z$ on a line which is not vertical; that is, $x_n\neq x$
and there is $\beta\in \R$ such that $\frac{y_n-y}{x_n-x}=\beta$ for every $n$. Then if we write a similar formula for the coordinates of $F(z_n)$ and $F(z)$ we obtain
$$
\frac{F(z_n)_2-F(z)_2}{F(z_n)_1-F(z)_1}=\frac{y_n +r(x_n)-y-r(x)}{x_n-x}=\beta + \frac{r(x_n)-r(x)}{x_n-x}\to \beta + r'(x).
$$
This implies that an arc obtained as the image of a radial line around $z$ in a small neighborhood has an asymptote at $F(z)$ and the asymptote is different for different $\beta$, and  lines of all possible directions as obtained as such an asymptote.
Then we can perform  the ``blow up'' procedure as in Theorem~\ref{generalnonivert} applied to the entire orbit of $z$, except for the compactification we use the circle of lines through the origin projected to an arc to compactify the pinched torus $\mathbb{T}^2\setminus\{z,F(z)\}$ with an additional arc. This makes the corresponding map on the resulting inverse limit space (which is still the torus) continuous because the assignment of a line to the corresponding asymptote is continuous.

By the construction using inverse limits, it is also clear that the condition $\lim_{|n|\to\infty}\diam \pi^{-1}(F^n(z))=0$ is satisfied.
\end{proof}

Define now the relation $\sim$ on $\mathbb{T}^2$ by $(x,y)\sim (x,y)$ and $(x,y)\sim (x+1/2,1-y)$. Clearly $\sim$ is a closed equivalence relation, and it is well known that $\mathbb{T}^2/\sim$ is the Klein bottle $\mathbb{K}$ and we use $p\colon \mathbb{T}^2 \to \mathbb{K}$ to denote the projection.

\begin{rem}
Let the homeomorphism $F\colon \mathbb{T}^2\to \mathbb{T}^2$ be a skew product defined by $F(x,y)=(x+\alpha,y+r(x))$ for some $\alpha \in \R$.
If $r(x+1/2)=1-r(x)$ then $F$ induces a homeomorphism $G$ on $\mathbb{K}$ satisfying $p\circ F = G\circ p.$ 
\end{rem}

\begin{thm}\label{thm:skewt3}
Let the homeomorphism $F\colon \mathbb{T}^2\to \mathbb{T}^2$ be a skew product defined by $F(x,y)=(x+\alpha, y+r(x))$ which preserves the relation $\sim$,
and let $G$ be the homeomorphism induced on the Klein bottle $\mathbb{K}$.
For every $z\in \mathbb{K}$ there exists a map $g\colon \mathbb{K}\to \mathbb{K}$ and an almost 1-1 factor map $\pi \colon (\mathbb{K},g)\to (\mathbb{K},G)$
such that $\pi^{-1}(y)$ is a singleton for every $y\not \in \{G^n(z) : n\in \Z\}$ and an arc otherwise. Furthermore $\lim_{|n|\to\infty}\diam \pi^{-1}(G^n(z))=0$.
\end{thm}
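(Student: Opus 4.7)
The plan is to lift the problem to the torus, apply Theorem~\ref{thm:skewt2} in a $\tau$-equivariant fashion, and then descend via the quotient map $p\colon \mathbb{T}^2\to \mathbb{K}$. Fix any lift $\tilde z\in p^{-1}(z)$. Since $F$ preserves $\sim$, the free involution $\tau(x,y)=(x+1/2,1-y)$ commutes with $F$, so the set $S:=p^{-1}(\{G^n(z):n\in\Z\})$ is a $\tau$-invariant union of either one or two complete $F$-orbits (depending on whether $\tau(\tilde z)\in\mathrm{Orb}_F(\tilde z)$). I would apply the construction in the proof of Theorem~\ref{thm:skewt2} simultaneously to every point of $S$, producing a space $\tilde T$ homeomorphic to $\mathbb{T}^2$, a noninvertible map $f\colon \tilde T\to \tilde T$, and an almost 1-1 factor map $\tilde\pi\colon (\tilde T,f)\to (\mathbb{T}^2,F)$ whose only nondegenerate fibers are arcs lying over the points of $S$, with diameters tending to zero.

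The heart of the argument is to carry out this blow-up $\tau$-equivariantly, so that $\tau$ lifts to a free involution $\tilde\tau\colon \tilde T\to \tilde T$ with $\tilde\pi\circ\tilde\tau=\tau\circ\tilde\pi$ and $\tilde\tau\circ f=f\circ\tilde\tau$. The relation $F\tau=\tau F$ forces $r(x+1/2)\equiv 1-r(x)\pmod 1$, hence $r'(x+1/2)=-r'(x)$; the constant differential $d\tau=\mathrm{diag}(1,-1)$ sends a line of slope $\beta$ through $\tilde z$ to a line of slope $-\beta$ through $\tau(\tilde z)$. A direct calculation with the asymptote formula $\beta\mapsto \beta+r'(x)$ from the proof of Theorem~\ref{thm:skewt2} then shows that the actions of $F$ on the space of directions at $\tilde z$ and at $\tau(\tilde z)$ are intertwined by $d\tau$. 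Choosing the nested flow-box neighbourhoods around paired points of $S$ symmetrically under $\tau$, the cone compactifications are identified by $d\tau$, and $\tau$ extends continuously to an involution $\tilde\tau$ of $\tilde T$. Freeness of $\tilde\tau$ is automatic: $\tau$ is free on $\mathbb{T}^2$, and for every $\tilde w\in S$ the map $\tilde\tau$ swaps the arc inserted at $\tilde w$ with the distinct arc inserted at $\tau(\tilde w)\neq \tilde w$.

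Finally, I pass to the quotient $\tilde T/\tilde\tau$. At each stage of the inverse limit one obtains a torus with finitely many pairs of arcs inserted, equipped with a free orientation-reversing involution that pairs those arcs; the corresponding quotient is therefore a Klein bottle, and the bonding maps descend to monotone maps between Klein bottles, so Brown's theorem yields $\tilde T/\tilde\tau\cong \mathbb{K}$. The maps $f$ and $\tilde\pi$ then descend to the desired $g\colon \mathbb{K}\to \mathbb{K}$ and $\pi\colon \mathbb{K}\to \mathbb{K}$ satisfying $G\circ \pi=\pi\circ g$. The fiber structure is immediate from that upstairs: $\pi^{-1}(y)$ is a singleton for $y\notin\{G^n(z):n\in\Z\}$ and is the image of the paired arcs over $p^{-1}(y)\subset S$, a single arc, otherwise; the diameter bound is inherited from $\tilde T$ because $p$ is a local isometry. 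The main obstacle is precisely the $\tau$-equivariance of the inverse limit construction---in particular, the compatibility of the choice of neighbourhoods $U_n$ with continuity of $\tilde\tau$ across the inserted arcs---and this reduces, via the identity $r'(x+1/2)=-r'(x)$, to the linear action of $d\tau$ on the space of directions.
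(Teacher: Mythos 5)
Your proposal is correct and follows essentially the same route as the paper: lift $z$ to $p^{-1}(z)\subset\mathbb{T}^2$, blow up the ($\tau$-invariant) union of the $F$-orbits of both preimage points as in Theorem~\ref{thm:skewt2}, and descend via the local homeomorphism $p$. The only difference is that you make explicit the $\tau$-equivariance check (via $r'(x+1/2)=-r'(x)$ and the action of $d\tau$ on asymptote directions) that the paper's one-paragraph proof leaves implicit.
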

\begin{proof}
The proof is the same as in Theorem~\ref{thm:skewt2}. Simply, if we aim to ``blow up'' the  point in $z\in\mathbb{K}$ then, since $p$ is a local homeomorphism, we can pull $z$  back to $\{x,y\}=p^{-1}(z)$ in $\mathbb{T}^2$ and blow up the orbits of both $x$ and $y$ as in Theorem~\ref{thm:skewt2} and then project this back to $\mathbb{K}$ via the local homeomorphism $p.$ We can then extend $G$ to the compactifying intervals using the same limits (radial lines) as for $F$.
\end{proof}

The procedure described above works regardless of the special properties that $F$ might have. In \cite{Parry} Parry presented a simple and elegant argument describing how to obtain a minimal homeomorphism of the Klein bottle by defining a function $r$ satisfying $r(x)=-r(x+1/2)$ and such that the skew product on $\mathbb{T}^2$ given by $F(x,y)=(x+\alpha, y+r(x))$ is minimal. In fact, $r$ was defined by a convergent Fourier series of a special type, allowing Parry to obtain a family of minimal systems on $\mathbb{K}$ in this way.
Motivated by this, Sotola and Trofimchuk  showed by careful calculations in \cite{Sotola} formulas to modify Parry examples to  non-minimal systems. They also mentioned (see Remark~2.9 in \cite{Sotola}) that their construction may lead
to a minimal homeomorphism $G$ on $\mathbb{K}$ with a continuum $D\subset \mathbb{K}$ with $\lim_{n\to\infty }\diam G^n(D)=0,$ which by results of \cite{Kolyada} may serve as an alternative proof. We see now that, based on the examples of Parry, we can obtain similar examples in a direct way. Recall that any almost 1-1 extension of a minimal homeomorphism is minimal.

\begin{cor}
Let $F(x,y)=(x+\alpha, y+r(x))$ be a minimal homeomorphism of $\mathbb{T}^2$ constructed by Parry, which induces a minimal homeomorphism $G$ on $\mathbb{K}$, with $r$ is differentiable and satisfying $r(x)=-r(x+1/2)$.
Then ``blowing up'' orbit of any point $z\in \mathbb{K}$ by Theorem~\ref{thm:skewt3} we obtain a minimal homeomorphism $h$ on $\mathbb{K}$
which is almost 1-1 extension of $G$ and contains an arc $D$ such that $\lim_{n\to\infty}\diam h^n(D)=0$. 
\end{cor}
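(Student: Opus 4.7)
The plan is to apply Theorem~\ref{thm:skewt3} directly to Parry's example and then read off the stated conclusions. First I would recall that Parry constructs a differentiable function $r\colon S^1\to\R$ satisfying $r(x+1/2)=-r(x)$ for which the skew product $F(x,y)=(x+\alpha,y+r(x))$ on $\mathbb{T}^2$ is minimal; the symmetry of $r$ is exactly what is needed for $F$ to descend through the projection $p\colon \mathbb{T}^2\to\mathbb{K}$ to a well-defined (and, being a factor of a minimal system, necessarily minimal) homeomorphism $G$ of the Klein bottle. This places us in the hypothesis of Theorem~\ref{thm:skewt3}, so invoking that theorem at the chosen point $z\in\mathbb{K}$ produces a map $h\colon \mathbb{K}\to\mathbb{K}$ together with an almost 1-1 factor map $\pi\colon (\mathbb{K},h)\to(\mathbb{K},G)$ whose fibers are singletons away from the full $G$-orbit of $z$, are arcs over that orbit, and satisfy $\lim_{|n|\to\infty}\diam \pi^{-1}(G^n(z))=0$.

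Next I would deduce the remaining two assertions. Minimality of $h$ is immediate from the standing fact (used repeatedly in the paper) that any almost 1-1 extension of a minimal system is minimal; since Parry's $G$ is minimal, so is $h$. For the shrinking arc, set $D:=\pi^{-1}(z)$, which is an arc by the fiber description. The semiconjugacy $\pi\circ h=G\circ\pi$, combined with invertibility of $h$, forces $h^n(D)=\pi^{-1}(G^n(z))$; its diameter tends to $0$ by the diameter clause of Theorem~\ref{thm:skewt3}.

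The only point that requires a little care is confirming that $h$ really is a homeomorphism, since Theorem~\ref{thm:skewt3} is phrased for a map. Here I would argue as in Remark~\ref{rem:r1}: the construction blows up \emph{both sides} of the orbit of $z$, replacing each point $G^n(z)$ ($n\in\Z$) with a compactifying arc coming from the angular-asymptote picture supplied by differentiability of $r$, and it is precisely this symmetric two-sided blow-up that renders the resulting inverse-limit map invertible. Beyond this bookkeeping, the argument is essentially a direct reading of Theorem~\ref{thm:skewt3} in the Parry setting, and I do not anticipate any further obstacle.
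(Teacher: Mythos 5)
Your proposal is correct and matches the paper's intent exactly: the corollary is a direct application of Theorem~\ref{thm:skewt3} to Parry's example, with minimality of $h$ coming from the standing fact that almost 1-1 extensions of minimal systems are minimal, $D=\pi^{-1}(z)$ serving as the shrinking arc via $h^n(D)\subseteq\pi^{-1}(G^n(z))$, and invertibility following from the two-sided blow-up as in Remark~\ref{rem:r1}. No discrepancies.
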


Clearly, when constructing inverse limit in Theorem~\ref{thm:skewt3} we do not have to blow up forward orbit of $z$, which allows us to construct noninvertible examples as described at the beginning of this section.

\begin{cor}
Let $F(x,y)=(x+\alpha, y+r(x))$ be a minimal homeomorphism of $\mathbb{T}^2$ constructed by Parry, which induces a minimal homeomorphism $G$ on $\mathbb{K}$, with $r$ is differentiable and satisfying $r(x)=-r(x+1/2)$.
Then ``blowing up'' the backward orbit of any point $z\in \mathbb{K}$ as in Theorem~\ref{thm:skewt3} we obtain a minimal but noninvertible map $h$ on $\mathbb{K}$ which is almost 1-1 extension of $G$.
\end{cor}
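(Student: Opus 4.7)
The plan is to apply the construction of Theorem~\ref{thm:skewt3} verbatim, except that, following Remark~\ref{rem:r1} and the one-sided variant used in Theorem~\ref{generalnonivert}, the compactification is performed only at points of the backward orbit $\{G^{-n}(z) : n \geq 1\}$ rather than at the whole $G$-orbit of $z$. In the pull-back/push-forward scheme of Theorem~\ref{thm:skewt3} this means lifting $z$ to the pair $p^{-1}(z) \subset \mathbb{T}^2$ and blowing up only the backward $F$-orbits of both preimages; compatibility with the $\sim$ relation is automatic since $F$ preserves $\sim$, so the construction descends to $\mathbb{K}$ via $p$.

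First I would build the usual inverse sequence of monotone near-homeomorphisms $\gamma_n$, where $\gamma_n$ collapses an arc $I_n$ inserted at $G^{-n}(z)$ to the point $G^{-n}(z)$ itself and is the identity elsewhere. By Brown's theorem the inverse limit is again $\mathbb{K}$. The map $h$ on the limit is defined so that outside of $\bigcup_n \pi^{-1}(G^{-n}(z))$ it agrees with $G$ via the natural projection $\pi$, on the arc $I_n$ for $n \geq 2$ it maps homeomorphically onto $I_{n-1}$ using the angular identification from Theorem~\ref{thm:skewt2}, and on the arc $I_1$ it collapses the entire arc to the single point $z$ (the point $z$ itself has not been blown up, so there is no arc waiting for $I_1$ to embed into).

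The main obstacle is continuity of $h$ at the collapsing fiber $I_1$. This is where the angular analysis
\[
\frac{F(z_n)_2 - F(z)_2}{F(z_n)_1 - F(z)_1} \;\longrightarrow\; \beta + r'(x)
\]
from the proof of Theorem~\ref{thm:skewt2} is essential: points of $I_1$ correspond to asymptotic directions $\beta$ at $G^{-1}(z)$, and $F$ sends these directions continuously to directions at $z$. Since $z$ is not blown up, all these direction-images coalesce into the single point $z$, and the continuous dependence of the asymptote on $\beta$ ensures that $h$ extends continuously across $I_1$. The same asymptotic formula applied for $n \geq 2$ guarantees that the arc-to-arc maps $I_n \to I_{n-1}$ glue continuously with the action of $G$ off the blow-up locus.

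The remaining properties are immediate: $h$ is noninvertible because the non-degenerate arc $I_1$ is mapped to a single point; the factor map $\pi \colon (\mathbb{K}, h) \to (\mathbb{K}, G)$ is almost 1-1 since only the countably many fibers over $\{G^{-n}(z) : n \geq 1\}$ are non-singletons and each arc $I_n$ is nowhere dense in $\mathbb{K}$; and minimality of $h$ is then inherited from that of $G$, as any almost 1-1 extension of a minimal system is minimal.
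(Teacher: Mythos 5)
Your proposal is correct and takes essentially the same route as the paper, which dispatches this corollary with a single sentence (``we do not have to blow up the forward orbit of $z$''), i.e.\ the backward-orbit variant of Theorem~\ref{thm:skewt3} exactly as in Theorem~\ref{generalnonivert}: noninvertibility from collapsing $I_1$ to the un-blown-up point $z$, continuity from the asymptote formula, and minimality from the almost 1-1 extension of a minimal system (which is legitimate here precisely because, as you observe, the inserted arcs are nowhere dense, so the set of singleton fibers pulls back to a dense set).
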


\section*{Acknowledgements}
The authors are grateful to Lubomir Snoha, Tomasz Downarowicz and Dariusz Tywoniuk for numerous discussions on properties of minimal dynamical systems and their extensions. In part, this work was supported by  NPU II project LQ1602 IT4Innovations excellence
in science, by  Grant IN-2013-045 from the Leverhulme Trust for an International Network and MSK grant 01211/2016/RRC ``Strengthening international cooperation in science, research and education'', which supported research visits of the authors. 

Research of P. Oprocha was supported by National Science Centre, Poland (NCN), grant no. 2015/17/B/ST1/01259, and J. Boro\'nski's work was supported by National Science Centre, Poland (NCN), grant no. 2015/19/D/ST1/01184.

\end{document}